\newtheoremstyle{ptheorem}{1em}{0em}{\itshape}{}{\bfseries}{.}{.5em}{\thmname{#1}\thmnumber{ #2}\thmnote{ (\hspace{-.01pt}{#3})}}
\theoremstyle{ptheorem}
\newtheorem{thm}{Theorem}[section]
\newtheorem{lem}[thm]{Lemma}
\newtheorem{cor}[thm]{Corollary}
\newtheoremstyle{hdef}{1em}{0em}{}{}{\bfseries}{.}{.5em}{\thmname{#1}\thmnumber{ #2}\thmnote{ (\hspace{-.01pt}{#3})}}
\theoremstyle{hdef}
\newtheorem{dfn}[thm]{Definition}
\newtheorem{rem}[thm]{Remark}
\newtheoremstyle{premark}{1em}{0em}{
\addtolength{\@totalleftmargin}{1.5em}
\addtolength{\linewidth}{-1.5em}
\parshape 1 1.5em \linewidth}{}{\scshape}{.}{.5em}{}
\theoremstyle{premark}
\newtheorem{exa}[thm]{Example}
\numberwithin{equation}{section}
\numberwithin{figure}{section}
\DeclareMathOperator{\sign}{sign}
\DeclareMathOperator{\im}{im}
\DeclareMathOperator{\supp}{supp}
\DeclareMathOperator{\Id}{Id}
\DeclareMathOperator{\dif}{d}
\DeclareMathOperator{\BVloc}{BV_{loc}}
\DeclareMathOperator{\AC}{AC}
\DeclareMathOperator{\ACloc}{AC_{loc}}
\DeclareMathOperator{\Loloc}{L_{loc}^1}
\DeclareMathOperator{\Lt}{L^2}
\DeclareMathOperator{\Lo}{L^1}
\DeclareMathOperator{\Dom}{Dom}
\newcommand{\cA}{{\mathcal A}}
\newcommand{\cC}{{\mathcal C}}
\newcommand{\cL}{{\mathcal L}}
\newcommand{\bC}{{\mathbb C}}
\newcommand{\bD}{{\mathbb D}}
\newcommand{\bN}{{\mathbb N}}
\newcommand{\bR}{{\mathbb R}}
\newcommand{\sH}{{\mathsf H}}
\renewcommand{\a}{\alpha}
\renewcommand{\b}{\beta}
\newcommand{\e}{\epsilon}
\renewcommand{\phi}{\varphi}
\newcommand{\<}{\langle}
\renewcommand{\>}{\rangle}
\newcommand{\ol}{\overline}
\newcommand{\n}{{n\in\bN}}
\renewcommand{\<}{\left<}
\renewcommand{\>}{\right>}
\renewcommand{\(}{\left(}
\renewcommand{\)}{\right)}
\newcommand{\olb}[1]{%
  \vbox{\offinterlineskip\ialign{\hfil##\hfil\cr $\rotatebox[origin=c]{90}{$]$}$\cr\noalign{\kern-.45ex}{$#1$}\cr}}}
\begin{document}
\title{{Green's functions for reducible functional differential equations}\footnote{Partially supported by FEDER and Ministerio de Educaci\'on y Ciencia, Spain, project MTM2010-15314}}

\author{
Alberto Cabada \, and F. Adri\'an F. Tojo\footnote{Supported by  FPU scholarship, Ministerio de Educaci\'on, Cultura y Deporte, Spain.} \\
\normalsize
Departamento de An\'alise Ma\-te\-m\'a\-ti\-ca, Facultade de Matem\'aticas,\\ 
\normalsize Universidade de Santiago de Com\-pos\-te\-la, Spain.\\ 
\normalsize e-mail: alberto.cabada@usc.es, fernandoadrian.fernandez@usc.es}
\date{}

\maketitle

\begin{abstract}
In this work we study differential problems in which the reflection operator and the Hilbert transform are involved. We reduce these problems to ODEs in order to solve them. Also, we describe a general method for obtaining the Green's function of reducible functional differential equations and illustrate it with the case of homogeneous boundary value problems with reflection and several specific examples.
\end{abstract}

\noindent {\bf Keywords:}  Equations with involutions. Equations with reflection. Green's functions.  Hilbert transform. Reducible functional differential equations. Algebraic analysis.
\section{Introduction}
Some special kind of functional differential equations, called reducible differential equations, can be solved by making operations on them which lead to a related problem with an ODE or system of ODEs (see, for instance, \cite{Sha,Wie2}).\par
{To be more specific, if $\bR[D]$ is the ring of polynomials on the usual differential operator $D$ and $\cA$ is any operator algebra containing $\bR[D]$, then an equation $L x=0$, where $L\in\cA$, is said to be a \textbf{reducible differential equation} if there exits $R\in\cA$ such that $RL\in\bR[D]$. A similar definition could be done for non-constant or complex coefficients.}

This ODE problem can be solved and, of the solutions obtained for it, some may be solutions of the original problem as well. This approach has recently been extended to the obtaining of Green's functions for some of those problems \cite{Toj,Toj2,Toj3,Cab4}.\par
It is important to point out that these transformations necessary to reduce the problem to an ordinary one are of a purely algebraic nature. It is in this sense similar to the \textit{algebraic analysis} theory which, through the study of Ore algebras and modules, obtains important information about some functional problems, including explicit solutions \cite{Clu,Bro}. Nevertheless, the algebraic structures we deal with here are somewhat different, e.\,g., they are not in general Ore algebras. {We refer the reader to \cite{Kor,Reg1,Reg2,Ros} for an algebraic approach to the abstract theory of boundary value problems and its applications to symbolic computation.}\par
Among the reducible functional differential equations, those with reflection have gathered great interest, some of it due to their applications to supersymmetric quantum mechanics \cite{Post, Roy, Gam} or to other areas of analysis like topological methods \cite{Cab5}.\par
In this work we put special emphasis in two operators appearing in the equations: the reflection operator and the Hilbert transform. Both of them have exceptional algebraic properties which make them fit for our approach.\par
In the next section we study the case of operators with reflection and the algebra generated by them, illustrating its properties. In Section 3, we show how we can compute the Green's function of a problem with reflections in a fairly general setting using the properties studied in Section 2. Section 3 introduces a particular case of a more general context. This new setting is studied in Section 4, where we outline the theory for abstract linear operators and prove, as a particular case, the result used in Section 3 to derive the Green's function. Finally, in Section 5, we show that the application of our results extends beyond equations with reflection, study the case of differential equations in which the Hilbert transform is involved and give an example of how to compute the solutions of these equations. Also, we show how these kind of operators relate to the complex polynomials and outline an analogous theory for hyperbolic polynomials.
\section{Differential operators with reflection}
In this Section we will study a particular family of operators, those that are combinations of the differential operator $D$, the pullback operator of the reflection $\phi(t)=-t$, denoted by $\phi^*(f)(t)=f(-t)$, and the identity operator, $\Id$. In order to freely apply the operator $D$ without worrying too much about it's domain of definition, we will consider that $D$ acts on the set of functions locally of bounded variation on $\bR$, $\BVloc(\bR)$\footnote{Since we will be working with $\bR$ as a domain throughout this article, it will be in our interest to take the local versions of the classical function spaces. By local version we mean that, if we restrict the function to a compact set, the restriction belongs to the classical space defined with that compact set as domain for its functions.}.\par
It is well known that any function locally of bounded variation $f\in\BVloc(\bR)$ can be expressed as
\begin{displaymath}f(x)=f(x_0)+\int_{x_0}^xg(y)\dif y+h(x),\end{displaymath}
for any $x_0\in\bR$ where $g\in L^1(\bR)$, and $h$ is the function of derivative zero almost everywhere \cite{Kol}. This implies that the distributional derivative (we will call it \textit{weak derivative} as shorthand) of $f$ is
\begin{displaymath}f'=g+\mu_s,\end{displaymath}
where $\mu_s$ is a singular measure with respect to the Lebesgue measure. In this way, we will define $D\,f:=g$ (we will restate this definition in a more general way further on).\par
We now consider the real abelian group $\bR[D,\phi^* ]$ of generators $\{D^k,\phi^*D^k\}_{k=0}^\infty$ where the powers $D^k$ are taken in the sense of composition (as notation, $D^0=\Id$, and a constant, say $a$, will be considered, in so far as an operator, acting on a function $f$ and returning the product $a\,f$). If we take the usual composition for operators in $\bR[D,\phi^* ]$, we observe that $D\phi^*=-\phi^*D$, so composition is closed in $\bR[D,\phi^* ]$, which makes it a \textit{non commutative algebra}. In general,
$D^k\phi^*=(-1)^k\phi^*D^k$ for $k=0,1,\dots$\par
The elements of $\bR[D,\phi^* ]$ are of the form
\begin{equation}\label{Lop}L=\sum_ia_i\phi^*D^i+\sum_jb_jD^j\in\bR[D,\phi^* ].\end{equation}
For convenience, we consider the sums on $i$ and $j$ such that $i,j\in\{0,1,\dots\}$, but taking into account that the coefficients $a_i,b_j$ are zero for big enough indices.\par
Despite the non commutativity of the composition in $\bR[D,\phi^* ]$ there are interesting relations in this algebra.\par
First notice that $\bR[D,\phi^*]$ is not a unique factorization domain. Take a polynomial $P=D^2+\b D+\a$ where $\a,\b\in \bR$, and define the following operators.\par
If $\b^2-4\a\ge 0$,
\begin{align*}
L_1 & :=D+\frac{1}{2} \left(\beta -\sqrt{\beta ^2-4 \alpha }\right),\\
R_1& :=D+\frac{1}{2} \left(\beta +\sqrt{\beta ^2-4 \alpha }\right),\\
L_2 & :=\phi^*D-\sqrt{2}D+\frac{1}{2} \left(\beta -\sqrt{\beta ^2-4 \alpha }\right) \phi^*+\frac{\left(-\beta +\sqrt{\beta ^2-4 \alpha }\right)}{\sqrt{2}},\\
R_2& :=\phi^*D-\sqrt{2}D-\frac{1}{2} \left(\beta +\sqrt{\beta ^2-4 \alpha }\right) \phi^*-\frac{\left(\beta +\sqrt{\beta ^2-4 \alpha }\right)}{\sqrt{2}},\\
L_3 & :=\phi^*D-\sqrt{2}D+\frac{1}{2} \left(\beta +\sqrt{\beta ^2-4 \alpha }\right) \phi^*-\frac{\left(\beta +\sqrt{\beta ^2-4 \alpha }\right)}{\sqrt{2}},\\
R_3& :=\phi^*D-\sqrt{2}D+\frac{1}{2} \left(-\beta +\sqrt{\beta ^2-4 \alpha }\right) \phi^*+\frac{\left(-\beta +\sqrt{\beta ^2-4 \alpha }\right)}{\sqrt{2}},\\
L_4 & :=\phi^*D+\sqrt{2}D+\frac{1}{2} \left(\beta -\sqrt{\beta ^2-4 \alpha }\right) \phi^*+\frac{\left(\beta -\sqrt{\beta ^2-4 \alpha }\right)}{\sqrt{2}},\\
R_4 & :=\phi^*D+\sqrt{2}D-\frac{1}{2} \left(\beta +\sqrt{\beta ^2-4 \alpha }\right) \phi^*+\frac{\left(\beta +\sqrt{\beta ^2-4 \alpha }\right)}{\sqrt{2}},\\
L_5 & :=\phi^*D+\sqrt{2}D+\frac{1}{2} \left(\beta +\sqrt{\beta ^2-4 \alpha }\right) \phi^*+\frac{\left(\beta +\sqrt{\beta ^2-4 \alpha }\right) }{\sqrt{2}},\\
R_5 & :=\phi^*D+\sqrt{2}D+\frac{1}{2} \left(-\beta +\sqrt{\beta ^2-4 \alpha }\right) \phi^*+\frac{\left(\beta -\sqrt{\beta ^2-4 \alpha }\right)}{\sqrt{2}}.
\end{align*}
If $\b=0$ and $\a\le0$,
\begin{align*}
L_6 & :=\phi^*D+\sqrt{-\a}\phi^*,\\
L_7 & :=\phi^*D-\sqrt{-\a}\phi^*,\\
\end{align*}
If  $\b=0$ and $\a\ge0$,
\begin{align*}
L_8 & :=D+\sqrt{\a}\phi^*,\\
L_9 & :=D-\sqrt{\a}\phi^*,\\
\end{align*}
If  $\b=0$ and $\a\le 1$,
\begin{align*}
L_{10} & :=\phi^*D-\sqrt{1-\a}\phi^*+1,\\
R_{10} & :=-\phi^*D+\sqrt{1-\a}\phi^*+1,\\
L_{11} & :=\phi^*D+\sqrt{1-\a}\phi^*+1,\\
R_{11} & :=-\phi^*D-\sqrt{1-\a}\phi^*+1,\\
\end{align*}
If  $\b=0$, $\a\ne0$ and $\a\le 1$,
\begin{align*}
L_{12} & :=\phi^*D-\sqrt{1-\a}D+\a,\\
R_{12} & :=-\frac{1}{\a}\phi^*D+\frac{\sqrt{1-\a}}{\a}D+1,\\
L_{13} & :=\phi^*D+\sqrt{1-\a}D+\a,\\
R_{13} & :=-\frac{1}{\a}\phi^*D-\frac{\sqrt{1-\a}}{\a}D+1,\\
\end{align*}
Then,
\begin{displaymath}P=L_1R_1=R_1L_1=R_2L_2=R_3L_3=R_4L_4=R_5L_5,\end{displaymath}
and, when $\b=0$,
\begin{displaymath}P=-L_6^2=-L_7^2=L_8^2=L_9^2=R_{10}L_{10}=L_{10}R_{10}=R_{11}L_{11}=L_{11}R_{11}=R_{12}L_{12}=L_{12}R_{12}=R_{13}L_{13}=L_{13}R_{13}.\end{displaymath}
Observe that only $L_1$ and $R_1$ commute in the case of $\b\ne0$.\par
This rises the question on whether we can decompose every differential polynomial $P$ in the composition of two `order one' elements of $\bR[D,\phi^* ]$, but this is not the case in general. Just take $Q=D^2+D+1$ (observe that $Q$ is not in any of the aforementioned cases). Consider a decomposition of the kind
\begin{displaymath}(a\phi^*D+bD+c\phi^*+d)(e\phi^*D+gD+h\phi^*+j)=Q,\end{displaymath}
where $a,b,c,d,e,g,h$ and $j$ are real coefficients to be determined. The resulting system
\begin{displaymath}\left\{\begin{aligned}
d h + c j  & = 0,\\ d e - c g + b h + a j  & = 0 ,\\
b e - a g & = 0,\\ -a e + b g & = 1,\\
c h + d j & = 1,\\ -c e + d g + a h + b j  & = 1,
\end{aligned}\right.\end{displaymath}
has no solution for real coefficients.\par
Let $\bR[D]$ be the ring of polynomials with real coefficients on the variable $D$. The following result states a very useful property of the algebra $\bR[D,\phi^* ]$.
\begin{thm}\label{thmdec}
Take $L$ as defined in \eqref{Lop} and take
\begin{equation}\label{Rop}R=\sum_{k}a_k\phi^*D^k+\sum_{l}(-1)^{l+1}b_lD^l\in\bR[D,\phi^* ].\end{equation}  Then $RL=LR\in\bR[D]$.
\end{thm}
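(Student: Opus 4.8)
The plan is to reduce everything to the normal form of elements of $\bR[D,\phi^*]$. Since $\phi^*(f)(t)=f(-t)$ gives $\phi^{*2}=\Id$, and the relation $D^k\phi^*=(-1)^k\phi^*D^k$ lets one push every occurrence of $\phi^*$ to the left, each element of the algebra can be written as $p(D)+\phi^*q(D)$ with $p,q\in\bR[D]$. The whole argument then runs on the three moving rules
\begin{equation*}
\phi^*\,p(D)=p(-D)\,\phi^*,\qquad p(D)\,\phi^*=\phi^*\,p(-D),\qquad \phi^{*2}=\Id,
\end{equation*}
together with the commutativity of $\bR[D]$.

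First I would put $L$ and $R$ into this normal form. Writing $A(D)=\sum_i a_iD^i$ and $B(D)=\sum_j b_jD^j$, the reflection part of \eqref{Lop} is $\sum_i a_i\phi^*D^i=\phi^*A(D)$, so $L=B(D)+\phi^*A(D)$. The role of the sign $(-1)^{l+1}$ in \eqref{Rop} is that $\sum_l(-1)^{l+1}b_lD^l=-\sum_l b_l(-D)^l=-B(-D)$, whence $R=-B(-D)+\phi^*A(D)$. Thus $R$ carries \emph{the same} $\phi^*$-part as $L$, while its pure-differential part is the reflected negative $-B(-D)$.

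Next I would expand $LR=(B(D)+\phi^*A(D))(-B(-D)+\phi^*A(D))$ into four terms. The product of the two pure parts is $-B(D)B(-D)$, and the product of the two $\phi^*$-parts is $\phi^*A(D)\cdot\phi^*A(D)=A(-D)A(D)$ (using $\phi^{*2}=\Id$); both lie in $\bR[D]$. The two cross terms are $B(D)\phi^*A(D)=\phi^*A(D)B(-D)$ and $\phi^*A(D)\cdot(-B(-D))=-\phi^*A(D)B(-D)$, which cancel by commutativity of $\bR[D]$. Hence $LR=A(D)A(-D)-B(D)B(-D)\in\bR[D]$. Carrying out the same expansion for $RL$ produces the identical pure terms and, again, a cancelling pair of cross terms, so $RL$ equals the very same polynomial; this yields $RL=LR\in\bR[D]$.

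The computation is routine once the normal form is in place, so the only real content is the sign bookkeeping, and that is the step I would be most careful about. The vanishing of the cross terms hinges on exactly two features of the definition of $R$: that it leaves the $\phi^*$-part $\phi^*A(D)$ of $L$ untouched, and that its pure part is precisely $-B(-D)$ (which is what the exponent $l+1$ rather than $l$ encodes). Were that sign reversed, the two cross terms would add to $2\phi^*A(D)B(-D)$ instead of cancelling, and the product would leave $\bR[D]$. In short, the heart of the statement is that $R$ is engineered precisely to annihilate the $\phi^*$-component of the product, after which what survives is the pure polynomial $A(D)A(-D)-B(D)B(-D)$.
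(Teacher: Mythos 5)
Your proof is correct and is essentially the paper's own argument: both expand the product and observe that the two $\phi^*$-cross-terms cancel precisely because of the sign $(-1)^{l+1}$ in the definition of $R$. You merely repackage the paper's double-sum bookkeeping into the normal form $L=B(D)+\phi^*A(D)$, $R=-B(-D)+\phi^*A(D)$ and the identity $RL=LR=A(D)A(-D)-B(D)B(-D)$, which is a tidier rendering of the same computation.
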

\begin{proof}
\begin{equation}\label{RL}\begin{aligned}RL= &\sum_{i,k}(-1)^ka_ia_kD^{i+k}+\sum_{j,k}b_ja_k\phi^*D^{j+k}+\sum_{i,l}(-1)^l(-1)^{l+1}a_ib_l\phi^*D^{i+l}+\sum_{j,l}(-1)^{l+1}b_jb_lD^{j+l}\\ = &\sum_{i,k}(-1)^ka_ia_kD^{i+k}+\sum_{j,l}(-1)^{l+1}b_jb_lD^{j+l}.\end{aligned}\end{equation}
Hence, $RL\in\bR[D]$.\par
Observe that, if we take $R$ in the place of $L$ in the hypothesis of the Theorem, we obtain $L$ in the place of $R$ and so, by expression \eqref{RL} $LR\in\bR[D]$.
\end{proof}
\begin{rem} Some interesting remarks on the coefficients of the operator $S=RL$ defined in Theorem \ref{thmdec} can be made.\par
If we have
\begin{displaymath}S=\sum_k c_kD^k=RL=\sum_{i,k}(-1)^ka_ia_kD^{i+k}+\sum_{j,l}(-1)^{l+1}b_jb_lD^{j+l},\end{displaymath}
then
\begin{displaymath}c_k=\sum_{i=0}^k(-1)^i(a_ia_{k-i}-b_ib_{k-i}).\end{displaymath}
A closer inspection reveals that
\begin{displaymath}c_k=\begin{dcases} 0, & k \text{ odd,} \\
2\sum_{i=0}^{\frac{k}{2}-1}\(-1\)^i\(a_ia_{k-i}-b_ib_{k-i}\)+\(-1\)^\frac{k}{2}\(a_\frac{k}{2}^2-b_\frac{k}{2}^2\) & k \text{ even.}\end{dcases}.\end{displaymath}
This has some important consequences. If $L=\sum_{i=0}^n a_i\phi^*D^i+\sum_{j=0}^n b_jD^j$ with $a_n\ne0$ or $b_n\ne0$, we have that $c_{2n}=(-1)^n(a_n^2-b_n^2)$\footnote{This is so because if $i\in\{0,\dots,n-1\}$, then $2n-i\in \{n+1,\dots,2n\}$ and $a_n$ (respectively $b_n$) are non-zero only for $n\in\{0,\dots,n\}$.} and so if $a_n=\pm b_n$ then $c_{2n}=0$. This shows that composing two elements of $\bR[D,\phi^* ]$ we can get another element which has simpler terms in the sense of derivatives of less order. We illustrate this with two examples.\par
Take $n\ge3$, $L=\phi^*D^n+D^n+D-\Id $ and $R=\phi^*D^n-(-1 )^nD^n+D+\Id $. Then
$RL=2D^{\alpha(n)}+D^2-\operatorname{Id}$ where $\alpha(n)=n$ if $n$ is even and $\alpha(n)=n+1$ if $n$ is odd.
If we take $n\ge0 $, $L=\phi^*D^{2n+1}+D^{2n+1}+\Id $ and $R=\phi^*D^n-(-1 )^nD^n+\Id $ then $RL=-\Id$.
\end{rem}
\begin{exa}\label{firstexa} Consider the equation
\begin{displaymath}x^{(3)}(t)+x^{(3)}(-t)+x(t)=\sin t.\end{displaymath}
Applying the operator $\phi^*D^3+D^3-\Id$ to both sides of the equation we obtain $x(t)=\sin t+2\cos t$. This is the unique solution of the equation, to which we had not imposed any extra conditions.\end{exa}

\section{Boundary Value Problems}
In this section we obtain the Green's function of boundary value problems with reflection and constant coefficients. We point out that the same approach used in this section is also valid for initial problems among other types of conditions.\par
Let $I=[a,b]\subset\bR$ be an interval and $f\in\Lo(I)$. Consider now the following problem with the usual derivative.
\begin{equation}\begin{aligned}\label{lccbvp}Su(t):= & \sum_{k=0}^na_ku^{(k)}(t)=f(t),\ t\in I,\\ B_iu:= & \sum_{j=0}^{n-1}\a_{ij}u^{(j)}(a)+\b_{ij}u^{(j)}(b)=0,\ i=1,\dots,n.
\end{aligned}\end{equation}
The following Theorem from \cite{Cab6} states the cases where we can find a unique solution for problem \eqref{lccbvp}\footnote{ In \cite{Cab6}, this result is actually stated for nonconstant coefficients, but the case of constant coefficients is enough for our purposes.}.
\begin{thm}\label{thmgf}
Assume the following homogeneous problem has a unique solution
\begin{equation*}\label{lccbvp2}Su(t)=0,\ t\in I,\ B_iu=0,\ i=1,\dots n.
\end{equation*}
Then there exists a unique function, called \textbf{Green's function}, such that
\begin{itemize}
\item[(G1)] $G$ is defined on the square $I^2$.
\item[(G2)] The partial derivatives $\frac{\partial^kG}{\partial t^k}$ exist and are continuous on $I^2$ for $k=0,\dots,n-2$.
\item[(G3)] $\frac{\partial^{n-1}G}{\partial t^{n-1}}$ and $\frac{\partial^nG}{\partial t^n}$ exist and are continuous on $I^2\backslash\{(t,t)\ :\ t\in I\}$.
\item[(G4)] The lateral limits $\frac{\partial^{n-1}G}{\partial t^{n-1}}(t,t^+)$ and $\frac{\partial^{n-1}G}{\partial t^{n-1}}(t,t^-)$ exist for every $t\in(a,b)$ and
\begin{displaymath}\frac{\partial^{n-1}G}{\partial t^{n-1}}(t,t^-)-\frac{\partial^{n-1}G}{\partial t^{n-1}}(t,t^+)=\frac{1}{a_n}.\end{displaymath}
\item[(G5)] For each $s\in(a,b)$ the function $G(\cdot,s)$ is a solution of the differential equation $Su=0$ on $I\backslash\{s\}$.
\item[(G6)] For each $s\in(a,b)$ the function $G(\cdot,s)$ satisfies the boundary conditions $B_iu=0\ i=1,\dots,n$.
\end{itemize}
Furthemore, the function $u(t):=\int_a^bG(t,s)f(s)\dif s$ is the unique solution of the problem \eqref{lccbvp}.
\end{thm}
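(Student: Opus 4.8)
The plan is to build $G$ explicitly from a fundamental system of solutions of $Su=0$ and then read off existence, uniqueness and the integral representation in turn. Since $a_n\neq0$, the equation $Su=0$ has an $n$-dimensional solution space; fix a basis $u_1,\dots,u_n$. By (G5), for each fixed $s\in(a,b)$ the function $G(\cdot,s)$ must be a combination of the $u_j$ on each side of the diagonal, so I would look for $G$ in the form
\begin{equation*}
G(t,s)=
\begin{cases}
\sum_{j=1}^n c_j(s)\,u_j(t), & a\le t<s,\\
\sum_{j=1}^n d_j(s)\,u_j(t), & s<t\le b,
\end{cases}
\end{equation*}
and determine the $2n$ coefficient functions from the remaining requirements.

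First I would fix the jump part from (G2)--(G4). Writing $\sigma_j(s)=d_j(s)-c_j(s)$, continuity of $\frac{\partial^k G}{\partial t^k}$ up to order $n-2$ together with the prescribed jump of size $1/a_n$ in the $(n-1)$-th derivative translates into
\begin{equation*}
\sum_{j=1}^n\sigma_j(s)\,u_j^{k)}(s)=0\quad(k=0,\dots,n-2),\qquad \sum_{j=1}^n\sigma_j(s)\,u_j^{n-1)}(s)=\frac{1}{a_n}.
\end{equation*}
The coefficient matrix of this system is the Wronskian of $u_1,\dots,u_n$ evaluated at $s$, which is nonsingular because the $u_j$ form a fundamental system; hence $\sigma(s)$ is uniquely determined. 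Next I would impose the boundary conditions (G6). Since $\frac{\partial^jG}{\partial t^j}(a,s)$ is computed from the left piece (the $c_j$) and $\frac{\partial^jG}{\partial t^j}(b,s)$ from the right piece (the $d_j=c_j+\sigma_j$), the $n$ equations $B_i[G(\cdot,s)]=0$ become a linear system for $c_1(s),\dots,c_n(s)$ whose coefficient matrix is exactly $\big(B_i[u_j]\big)_{i,j=1}^n$. The hypothesis that the homogeneous problem has only the trivial solution is precisely the invertibility of this matrix, so $c(s)$, and with it $d(s)=c(s)+\sigma(s)$, is uniquely determined. This both constructs $G$ (existence) and shows that every function meeting (G1)--(G6) must coincide with it (uniqueness); the regularity assertions (G1)--(G3) are inherited from the $u_j$.

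Finally I would check that $u(t)=\int_a^bG(t,s)f(s)\,\dif s$ solves \eqref{lccbvp}. The continuity in (G2) lets the first $n-1$ derivatives pass under the integral with cancelling diagonal boundary terms, giving $u^{k)}(t)=\int_a^b\frac{\partial^k G}{\partial t^k}(t,s)f(s)\,\dif s$ for $k\le n-1$. Differentiating once more I would split $\int_a^b=\int_a^t+\int_t^b$ and apply the Leibniz rule; the two diagonal boundary terms no longer cancel but combine, by (G4), into $\frac{1}{a_n}f(t)$, so that
\begin{equation*}
Su(t)=a_n\cdot\frac{1}{a_n}f(t)+\int_a^b\Big(\sum_{k=0}^n a_k\,\frac{\partial^k G}{\partial t^k}(t,s)\Big)f(s)\,\dif s=f(t),
\end{equation*}
since the bracketed integrand vanishes off the diagonal by (G5). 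The boundary conditions follow from $B_iu=\int_a^b B_i[G(\cdot,s)]f(s)\,\dif s=0$ via (G6), and uniqueness of $u$ follows once more from the hypothesis on the homogeneous problem. I expect the only genuinely delicate point to be this last differentiation: one has to track which piece of $G$ is active on each side of the diagonal and verify that the boundary contributions cancel at orders $\le n-1$ while producing exactly the jump $1/a_n$ at order $n$.
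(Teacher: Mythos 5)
The paper offers no proof of this theorem: it is imported verbatim from the reference \cite{Cab6} (see the sentence ``The following Theorem from \cite{Cab6} states\dots''), so there is nothing internal to compare against. Your argument is the classical construction of the Green's function and it is essentially correct and complete: writing $G(\cdot,s)$ as a combination of a fundamental system on each side of the diagonal, determining the jump vector $\sigma(s)$ from the Wronskian system forced by (G2)--(G4), and then determining $c(s)$ from the matrix $\bigl(B_i[u_j]\bigr)_{i,j}$, whose invertibility is exactly the hypothesis on the homogeneous problem, simultaneously yields existence and uniqueness of $G$; and your bookkeeping of which piece is active is consistent with the paper's convention that the lateral limits in (G4) are taken in the second variable, so that $(t,t^-)$ sees the piece $s<t$ and $(t,t^+)$ the piece $s>t$. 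Two small points deserve explicit mention if this were to be written out in full. First, for the uniqueness of $G$ you should note that (G2)--(G3) guarantee that $G(\cdot,s)$ is a classical solution on each of $[a,s)$ and $(s,b]$, which is what licenses the ansatz as the \emph{only} possible form. Second, since $f$ is merely in $\Lo(I)$, the Leibniz differentiations of $\int_a^t$ and $\int_t^b$ must be interpreted in the absolutely continuous sense: one shows $u^{k)}\in\ACloc$ for $k\le n-1$ by Fubini rather than by pointwise evaluation of $G(t,t^{\pm})f(t)$, and the identity $Su=f$ then holds almost everywhere. These are routine refinements, not gaps in the idea.
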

Using the properties (G1)--(G6) and Theorem \ref{thmdec} one can prove Theorem \ref{thmdei}. The proof of this result will be a direct consequence of Theorem  \ref{thmmostgen}.\par
Given an operator $\cL$ for functions of one variable, define the operator $\cL_\vdash$ as $\cL_\vdash G(t,s):=\cL(G(\cdot,s))|_{t}$ for every $s$ and any suitable function $G$.
\begin{thm}\label{thmdei} Let $I=[-T,T]$. Consider the problem
\begin{equation}\label{rbvp}Lu(t)=h(t),\ t\in I,\ B_iu=0,\ i=1,\dots,n,
\end{equation}
where $L$ is defined as in \eqref{Lop}, $h\in L^1(I)$ and
\begin{displaymath}B_iu:=\sum_{j=0}^{n-1}\a_{ij}u^{(j)}(-T)+\b_{ij}u^{(j)}(T).\end{displaymath}
Then, there exists $R\in \bR[D,\phi^* ]$ (as in \eqref{Rop}) such that $S:=RL\in\bR[D]$ and the unique solution of problem \eqref{rbvp} is given by $\int_a^bR_\vdash G(t,s)h(s)\dif s$ where $G$ is the Green's function associated to the problem $Su=0$, $B_iRu=0$, $B_iu=0$, $i=1,\dots,n$, assuming that it has a unique solution.
\end{thm}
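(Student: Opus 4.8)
The plan is to exploit Theorem~\ref{thmdec} to trade the reflection equation for an ODE of twice the order, solve the latter with Theorem~\ref{thmgf}, and transport the solution back by $R$. First I would invoke Theorem~\ref{thmdec} to produce $R$ as in \eqref{Rop} with $S:=RL=LR\in\bR[D]$; by the Remark following that theorem $S$ is even (a polynomial in $D^2$) and, provided $a_n\neq\pm b_n$, has order exactly $2n$, so its symbol is $q(\lambda^2)$ with $\deg q=n$. Because $I=[-T,T]$ is symmetric, $\phi^*$ permutes the endpoints, so each $B_iR$ is again a genuine two-point boundary functional involving derivatives of order at most $2n-1$ at $\pm T$; thus $Su=0$ together with the $2n$ conditions $B_iu=0,\ B_iRu=0$ is an admissible problem for Theorem~\ref{thmgf}. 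Under the stated hypothesis the Green's function $G$ exists and $v_0(t):=\int_{-T}^{T}G(t,s)h(s)\,\dif s$ is the unique solution of $Sv=h$ with those $2n$ conditions. For \emph{existence} I would set $u_0:=Rv_0$; since $G$ enjoys (G2)--(G3) one may differentiate under the integral sign for orders up to $n\le 2n-1$, giving $u_0(t)=\int_{-T}^{T}R_\vdash G(t,s)h(s)\,\dif s$, and then $Lu_0=LRv_0=Sv_0=h$ while $B_iu_0=B_i(Rv_0)=0$, which is exactly the second batch of conditions on $v_0$.

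The substance is \emph{uniqueness}. Writing $w$ for the difference of two putative solutions, I must show that $Lw=0,\ B_iw=0$ force $w=0$. Applying $R$ gives $Sw=RLw=0$, so $w$ lies in the $2n$-dimensional space $V:=\ker S$. The first structural point is that $L$ and $R$ restrict to endomorphisms of $V$ with $LR=RL=0$ there: if $Sv=0$ then $S(Rv)=R(LR)v=RSv=0$ and $S(Lv)=L(RL)v=LSv=0$. The crux is then the following.

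\emph{Key Lemma.} $\dim\ker(L|_V)=\dim\ker(R|_V)=n$; consequently $\im(R|_V)=\ker L$ and $\im(L|_V)=\ker R$. Granting this, I finish as follows. Since $w\in\ker L=\im(R|_V)$, choose $\tilde v_1\in V$ with $R\tilde v_1=w$. The map $\ker R\to\bR^{n}$, $k\mapsto(B_1k,\dots,B_nk)$, is injective: if $k\in\ker R$ and $B_ik=0$ for all $i$, then $k\in V$, $Sk=0$, $B_ik=0$ and $B_iRk=B_i(0)=0$, so the ODE hypothesis gives $k=0$. As $\dim\ker R=n$ this map is bijective, so there is $k\in\ker R$ with $B_ik=-B_i\tilde v_1$ for every $i$. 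Setting $\tilde v:=\tilde v_1+k$ yields $R\tilde v=w$ and $B_i\tilde v=0$, whence $B_iR\tilde v=B_iw=0$; now $S\tilde v=0$ together with $B_i\tilde v=0$ and $B_iR\tilde v=0$ forces $\tilde v=0$ by the hypothesis, so $w=R\tilde v=0$. This proves uniqueness and identifies $u_0$ as the unique solution.

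The main obstacle is the Key Lemma. I would prove it by diagonalising $D^2$ on $V$: writing $S=q(D^2)$ with simple roots $q(\mu_r)=0$ and $\nu_r=\sqrt{\mu_r}$, each block $\langle e^{\nu_r t},e^{-\nu_r t}\rangle$ is $L$-invariant, and, writing $L=\phi^*\cA+\cB$ with $\cA,\cB\in\bR[D]$, the operator $L$ acts there by $L_r=\left(\begin{smallmatrix}\cB(\nu_r)&\cA(-\nu_r)\\ \cA(\nu_r)&\cB(-\nu_r)\end{smallmatrix}\right)$. The analogous $R_r$ satisfies $R_rL_r=-\det(L_r)\,\Id$ and $\det R_r=\det L_r=\cB(\nu_r)\cB(-\nu_r)-\cA(\nu_r)\cA(-\nu_r)=-q(\mu_r)$. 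Since $\mu_r$ is a root, $\det L_r=0$, so every block is singular and, as long as $L_r\neq0$ (guaranteed generically by $a_n\neq\pm b_n$), contributes a one-dimensional kernel; summing over the $n$ blocks gives $\dim\ker L=n$, and the inclusion $\im(R|_V)\subseteq\ker L$ becomes equality by dimension count. The fiddly part I anticipate is upgrading this eigenvalue computation to repeated roots via generalized eigenspaces and Jordan blocks, and pinning down the precise non-degeneracy hypothesis (no block with $L_r=0$) under which $n$ conditions genuinely determine the reflection problem.
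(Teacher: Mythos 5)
Your proposal reaches the statement by a genuinely different route from the paper. The paper does not argue on the ODE directly: it feeds the problem into the abstract Theorem \ref{thmmostgen}, taking $G$ to be the Green's function of the reduced problem and checking that the axioms (G1)--(G6) of Theorem \ref{thmgf} deliver hypotheses $(I)$--$(VII)$; in particular your step ``differentiate $R$ under the integral sign'' is exactly what $(IV)$--$(V)$ encode via (G2) and the jump condition (G4), and uniqueness is dispatched in a single sentence by referring back to the reduced homogeneous problem. Your existence argument is the same computation done concretely ($u_0=Rv_0$, $Lu_0=LRv_0=Sv_0=h$, $B_iu_0=B_iRv_0=0$) and is correct; you also make explicit a point the paper leaves implicit, namely that on the symmetric interval $[-T,T]$ the functionals $B_iR$ are again two-point conditions involving derivatives of order at most $2n-1$, so that Theorem \ref{thmgf} genuinely applies to the order-$2n$ reduced problem. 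What the paper's abstract detour buys is reusability (the same scheme works verbatim for the Hilbert-transform algebra of Section 5); what your direct version buys is transparency.

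Where you truly diverge, and add substance, is uniqueness. The paper's one-line argument is incomplete as stated: from $Lw=0$ and $B_iw=0$ one gets $Sw=0$ and $B_iw=0$ for free, but \emph{not} $B_iRw=0$, so $w$ is not obviously a nontrivial solution of the reduced homogeneous problem. Your Key Lemma ($\dim\ker(L|_{\ker S})=\dim\ker(R|_{\ker S})=n$, hence $\ker L=\im(R|_{\ker S})$), together with the adjustment of $\tilde v_1$ by an element of $\ker R$ realising prescribed values of $(B_1,\dots,B_n)$, is precisely the missing bridge, and the block computation $R_rL_r=-\det(L_r)\,\Id$ with $\det L_r=\det R_r=-q(\mu_r)=0$ checks out. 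The genuine gap is the one you flag yourself: the lemma is established only when $q$ has simple nonzero roots and no block matrix vanishes. If some $L_r=0$ (equivalently, $\cA$ and $\cB$ share the factor $D^2-\mu_r$, writing $L=\phi^*\cA(D)+\cB(D)$), then $\dim\ker(L|_{\ker S})>n$ and the bijectivity of $k\mapsto(B_1k,\dots,B_nk)$ on $\ker(R|_{\ker S})$ fails, so the argument as written does not close; the repeated-root and $\mu_r=0$ cases likewise still need the Jordan-block analysis you defer. Until those degenerate cases are either settled or excluded by hypothesis, your uniqueness proof covers only the generic situation --- though it must be said that the paper's own treatment of uniqueness is no more complete on exactly this point.
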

{ For the following example, let us explain some notations. Let $k,p\in\bN$. We denote by $W^{k,p}(I)$ the Sobolev Space defined by
\begin{displaymath}W^{k,p}(I) = \left \{ u \in L^p(I) : D^{\alpha}u \in L^p(I) \,\, \forall \alpha \leq k \right \}.\end{displaymath}
Given a constant $a\in\bR$ we can consider the pullback by this constant as a functional $a^*:\cC(I)\to\bR$ such that $a^*f=f(a)$ in the same way we defined it for functions.}
\begin{exa}\label{exagf}
Consider the following problem.
\begin{equation}\label{prooc}u''(t)+a\,u(-t)+b\,u(t)=h(t),\ t\in I,\quad  u(-T)=u(T),\ u'(-T)=u'(T).
\end{equation}
 where $h\in W^{2,1}(I)$. Then, the operator we are considering is $L=D^2+a\,\phi^*+b$. If we take $R:=D^2-a\,\phi^*+b$, we have that $RL=D^4+2b\,D^2+b^2-a^2$.\par
The boundary conditions are $((T^*)-(-T)^*)u=0$ and $((T^*)-(-T)^*)Du=0$. Taking this into account,  we add the conditions \begin{displaymath}0=((T^*)-(-T)^*)Ru=((T^*)-(-T)^*)(D^2-a\,\phi^*+b)u=((T^*)-(-T)^*)D^2u,\end{displaymath}
\begin{displaymath}0=((T^*)-(-T)^*)RDu=((T^*)-(-T)^*)(D^2-a\,\phi^*+b)Du=((T^*)-(-T)^*)D^3u.\end{displaymath}
That is, our new \textit{reduced} problem is
\begin{equation}\label{proocr}u^{(4)}(t)+2b\,u''(t)+(b^2-a^2)u(t)=f(t),\ t\in I,\quad  u^{(k)}(-T)=u^{(k)}(T),\ k=0,\dots,3.
\end{equation}
where $f(t)=R\,h(t)=h''(t)-a\,h(-t)+b\,h(t)$.\par
Observe that this problem is equivalent to the system of equations (a chain of order two problems)
\begin{align*}
u''(t)+(b+a)u(t) & =v(t),\ t\in I,\quad u(-T)=u(T),\ u'(-T)=u'(T),\\
v''(t)+(b-a)v(t) & =f(t),\ t\in I,\quad v(-T)=v(T),\ v'(-T)=v'(T).
\end{align*}
Thus, it is clear that
\begin{displaymath}u(t)=\int_{-T}^TG_1(t,s)v(s)\dif s,\ v(t)=\int_{-T}^TG_2(t,s)f(s)\dif s,\end{displaymath}
where, $G_1$ and $G_2$ are the Green's functions related to the previous second order problems. Explicitly, in the case $b>|a|$ (the study for other cases would be analogous),
\begin{equation*}
2\sqrt{b+a}\sin(\sqrt{b+a}\,T)G_1(t,s)=\begin{cases} \cos \sqrt{b+a}(T+s-t) & \text{if}\quad s\le t,\\\cos \sqrt{b+a}(T-s+t) & \text{if}\quad s>t.\end{cases}
\end{equation*}
and
\begin{equation*}
2\sqrt{b-a}\sin(\sqrt{b-a}\,T)G_2(t,s)=\begin{cases} \cos \sqrt{b-a}(T+s-t) & \text{if}\quad s\le t,\\\cos \sqrt{b-a}(T-s+t) & \text{if}\quad s>t.\end{cases}
\end{equation*}

Hence, the Green's function $G$ for problem \eqref{proocr} is given by
\begin{displaymath}G(t,s)=\int_{-T}^TG_1(t,r)G_2(r,s)\dif r.\end{displaymath}
Therefore, using Theorem \ref{thmdei}, the Green's function for problem \eqref{prooc} is
\begin{displaymath}\ol G(t,s)=R_\vdash G(t,s)=\frac{\partial^2 G}{\partial t^2}(t,s)-a\,G(-t,s)+b\,G(t,s).\end{displaymath}
\end{exa}
\begin{rem} We can reduce the assumptions on the regularity of $h$  to $h\in\Lo(I)$ just taking into account the density of $W^{2,1}(I)$ in $L^1(I)$.
\end{rem}
\begin{rem}\label{remord}
Example \ref{firstexa} illustrates the importance of the existence and uniqueness of solution of the problem $Su=0,\ B_iRu=0,\ B_iu=0$ in the hypothesis of Theorem \ref{thmdei}. In general, when we compose two linear ODEs, respectively of orders $m$ and $n$ and a number $m$ and $n$ of conditions, we obtain a new problem of order $m+n$ and $m+n$ conditions. As we see this is not the case in the reduction of  Theorem \ref{thmdei}. In the case the order of the reduced problem is less than $2n$ anything is possible: we may have an infinite number of solutions, no solution or uniqueness of solution being the problem non-homogeneous. The following example illustrates this last case.
\end{rem}
\begin{exa}
Consider the problem
\begin{equation}\label{sppro} Lu(t):= u^{(4)}(t)+u^{(4)}(-t)+u''(-t)=h(t),\ t\in[-1,1]\quad u(1)=u(-1)=0,\end{equation}
where $h\in W^{4,1}([-1,1])$.\par
For this case, $Ru(t):=-u^{(4)}(t)+u^{(4)}(-t)+u''(-t)$ and the reduced equation is $RLu=2u^{(6)}+u^{(4)}=Rh$, which has order $6<2\cdot4=8$, so there is a reduction of the order. Now we have to be careful with the new \textit{reduced} boundary conditions.
\begin{equation}\begin{aligned}\label{redbc} B_1u(t) & =u(1)=0,\\
B_2u(t) & =u(-1)=0,\\
 B_1Ru(t) &=-u^{(4)}(1)+u^{(4)}(-1)+u''(-1)=0,\\
 B_2Ru(t) & =-u^{(4)}(-1)+u^{(4)}(-1)+u''(1)=0,\\
 B_1Lu(t) &=u^{(4)}(1)+u^{(4)}(-1)+u''(-1)=h(1),\\
 B_2Lu(t) & =u^{(4)}(-1)+u^{(4)}(-1)+u''(1)=h(-1).\\
\end{aligned} \end{equation}
 Being the two last conditions the obtained from applying the original boundary conditions to the original equation.\par
 \eqref{redbc} is a system of linear equations which can be solved for $u$ and its derivatives as
 \begin{equation}
 u(1)=u(-1)=0,\ u''(1)=-u''(-1)=\frac{1}{2}(h(1)-h(-1)),\ u^{(4)}(\pm 1)=\frac{h(\pm 1)}{2}.
 \end{equation}
Consider now the reduced problem
 \begin{align*}
&  2u^{(6)}(t)+u^{(4)}(t)=Rh(t)=:f(t),\ t\in[-1,1], \\
&  u(1)=u(-1)=0,\ u''(1)=-u''(-1)=\frac{1}{2}(h(1)-h(-1)),\ u^{(4)}(\pm 1)=\frac{h(\pm 1)}{2},
 \end{align*}
 and the change of variables $v(t):=u^{(4)}(t)$.
 Now we look the solution of
\begin{displaymath}  2v''(t)+v(t)=f(t),\ t\in[-1,1],\ v(\pm 1)=\frac{h(\pm 1)}{2}, \end{displaymath}
Which is given by
\begin{displaymath}v(t)=\int_{-1}^1G(t,s)f(s)\dif s-\frac{h(1)\csc\sqrt{2}}{2}\sin\(\frac{t-1}{\sqrt 2}\)+\frac{h(-1)\csc\sqrt{2}}{2}\sin\(\frac{t+1}{\sqrt 2}\),\end{displaymath}
where
\begin{displaymath}G(t,s):=\frac{\csc\sqrt{2}}{\sqrt{2}}\begin{dcases}
\sin \left(\frac{s+1}{\sqrt{2}}\right) \sin \left(\frac{t-1}{\sqrt{2}}\right), & -1\le s\le t\le1, \\
 \sin \left(\frac{s-1}{\sqrt{2}}\right) \sin \left(\frac{t+1}{\sqrt{2}}\right), & -1\le t<s\le1.
\end{dcases}\end{displaymath}
Now, it is left to solve the problem
\begin{displaymath}u^{(4)}(t)=v(t),\ u(1)=u(-1)=0,\ u''(1)=-u''(-1)=\frac{1}{2}(h(1)-h(-1)).\end{displaymath}
\end{exa}
The solution is given by
\begin{displaymath}u(t)=\int_{-1}^{1}K(t,s)v(s)\dif s+\frac{h(1)-h(-1)}{12}t(t - 1) (t + 1).\end{displaymath}
where
\begin{displaymath}K(t,s)=\frac{1}{12}\begin{dcases}
  (s+1) (t-1) \left(s^2+2 s+t^2-2 t-2\right), & -1\le s\le t\le 1, \\
  (s-1) (t+1) \left(s^2-2 s+t^2+2 t-2\right), &  -1\le t<s\le 1.
\end{dcases}\end{displaymath}
Hence, taking $J(t,s)=\int_{-1}^1H(t,r)G(r,s)\dif s$,
\begin{align*} & J(t,s):=  \\ &\frac{\csc\sqrt 2}{12\sqrt 2}\begin{dcases}
 \sqrt{2}\sin(\sqrt{2}) (s+1) (t-1)  [s (s+2)+(t-2) t-14]+24 \cos \left(\frac{s-t+2}{\sqrt{2}}\right)-24 \cos \left(\frac{s+t}{\sqrt{2}}\right), \\-1\le s\le t\le 1, \\
 \sqrt{2}\sin(\sqrt{2}) (s-1) (t+1)[(s-2) s+t (t+2)-14]+24 \cos \left(\frac{s-t-2}{\sqrt{2}}\right)-24 \cos \left(\frac{s+t}{\sqrt{2}}\right), \\ -1\le t<s\le 1.
\end{dcases}
\end{align*}
Therefore,
\begin{align*}u(t) & =\int_{-1}^{1}J(t,s)f(s)\dif s-\frac{h(1)\csc\sqrt{2}}{2}\left[\frac{1}{6} (t-5) (t-1) (t+3) \sin \left(\sqrt{2}\right)+4 \sin \left(\frac{t-1}{\sqrt{2}}\right)\right] \\ &+\frac{h(-1)\csc\sqrt{2}}{2}\left[\frac{1}{6} (t-3) (t+1) (t+5) \sin \left(\sqrt{2}\right)+4 \sin \left(\frac{t+1}{\sqrt{2}}\right)\right] +\frac{h(1)-h(-1)}{12}t(t - 1) (t + 1).
\end{align*}
\section{The reduced problem}
The usefulness of a theorem of the kind of Theorem \ref{thmdei} is clear, for it allows the obtaining of the Green's function of any problem of differential equations with constant coefficients and involutions, generalizing the works \cite{Cab4, Toj, Toj2}.  The proof of this Theorem relies heavily on the properties $(G1)-(G6)$, so our main goal now is to consider abstractly these properties in order to apply them in a more general context with different kinds of operators.\par
Let $X$ be a vector subspace of $\Loloc(\bR)$, and $(\bR,\tau)$ the real line with its usual topology. Define $X_U:=\{f|_U\ :\ f\in X\}$ for every $U\in\tau$ (observe that $X_U$ is a vector space as well). Assume that $X$ satisfies the following property.\par
\textbf{(P)} \textit{For every partition of $\bR$, $\{S_j\}_{j\in J}\cup\{N\}$, consisting of measurable sets where $N$ has no accumulation points and the $S_j$ are open, if $f_j\in X_{S_j}$ for every $j\in J$, then there exists $f\in X$ such that $f|_{S_j}=f_j$ for every $j\in J$.}\par
\begin{exa}
The set of locally absolutely continuous functions $\ACloc(\bR)\subset\Loloc(\bR)$ does not satisfy \textbf{(P)}. To see this just take the following partition of $\bR$: $S_1=(-\infty,0)$, $S_2=(0,+\infty)$, $N=\{0\}$ and consider $f_1\equiv 0$, $f_2\equiv 1$. $f_j\in \AC(\bR)_{S_j}$ for $j=1,2$, but any function $f$ such that $f|_{S_j}=f_j$, $j=1,2$ has a discontinuity at $0$, so it cannot be absolutely continuous. That is, \textbf{(P)} is not satisfied. 
\end{exa}

\begin{exa}
$X=\BVloc(\bR)$ satisfies \textbf{(P)}. Take a partition of $\bR$, $\{S_j\}_{j\in J}\cup\{N\}$, consisting of measurable sets where $N$ has no accumulation points and the $S_j$ are open and a family of functions $(f_j)_{j\in J}$ such that $f_j\in X_{S_j}$ for every $j\in J$. We can further assume, without lost of generality, that the $S_j$ are connected. Define a function $f$ such that $f|_{S_j}:=f_j$ and $f|_N=0$. Take a compact set $K\subset\bR$. Then, by Bozano-Weierstrass' and Heine-Borel's Theorems, $K\cap N$ is finite for $N$ has no accumulation points. Therefore, $J_K:=\{j\in J : S_j\cap K\ne\emptyset\}$ is finite as well. To see this denote by $\partial S$ the boundary of a set $S$ and observe that $N\cup K=\cup_{j\in J}\partial (S_j\cap K)$ and that the sets $\partial (S_j\cap K)\cap \partial (S_k\cap K)$ are finite for every $j,k\in J$.\par
Thus, the variation of $f$ in $K$ is $V_K(f)\le\sum_{j\in J_K}V_{S_j}(f)<\infty$ since $f$ is of bounded variation on each $S_j$. Hence, $X$ satisfies \textbf{(P)}.
\end{exa}
Throughout this section we will consider a function space $X$ satisfying \textbf{(P)} and two families of linear operators $L=\{L_U\}_{U\in\tau}$ and $R=\{R_U\}_{U\in\tau}$ that satisfy
\begin{description}
\item[\textit{Locality:}]$L_U\in\cL(X_U,\Loloc(U))$, $R_U\in\cL(\im(L_U),\Loloc(U))$,
\item[\textit{Restriction:}] $L_V(f|_V)=L_U(f)|_V$, $R_V(f|_V)=R_U(f)|_V$ for every $U,V\in\tau$ such that $V\subset U$\footnote{The definitions here presented of $L$ and $R$ are deeply related to Sheaf Theory. Since the authors want to make this work as self-contained as possible, we will not deepen into that fact.}.
\end{description}
The following definition allows us to give an example of an space that satisfies the properties of locality and restriction.
\begin{dfn} Let $f:\bR\to\bR$ and assume there exists a partition $\{S_j\}_{j\in J}\cup\{N\}$ of $\bR$ consisting of measurable sets where $N$ is of zero Lebesgue measure such that satisfying that the weak derivative $g_i$ exists for every $f|_{S_j}$, then a function $g$ such that $g|_{S_j}=g_j$ is called the \textbf{very weak derivative} (vw-derivative) of $f$.
\end{dfn}
\begin{rem} The vw-derivative is uniquely defined save for a zero measure set and is equivalent to the weak derivative for absolutely continuous functions.\par
Nevertheless, the vw-derivative is different from the derivative of distributions. For instance, the derivative of the Heavyside function in the distributional sense is de Dirac delta at 0, whereas its vw-derivative is zero. What is more, the kernel of the vw-derivative is the set of functions which are constant on a family of open sets $\{S_j\}_{j\in J}$ such $\bR\backslash(\cup_{j\in J} S_j)$ has Lebesgue measure zero.
\end{rem}
\begin{exa}
Take $X=\BVloc(\bR)$ and $L=D$ to be the very weak derivative. Then $L$ satisfies the locality and restriction hypotheses.
\end{exa}
\begin{rem}
The vw-derivative, as defined here, is the $D$ operator defined in Section 2 for functions of bounded variation. In other words, the vw-derivative ignores the jumps and considers only those parts with enough regularity.
\end{rem}
\begin{rem}
The locality property allows us to treat the maps $L$ and $R$ as if they were just linear operators in $\cL(X,\Loloc(\bR))$ and $\cL(\im(X),\Loloc(\bR))$ respectively, although we must not forget their more complex structure.
\end{rem}
Assume $X_U\subset\im( L_U)\subset\im (R_U)$ for every $U\in\tau$. $B_i\in\cL(\im( R_\bR),\bR)$, $i=1,\dots,m$ and $h\in\im(L_\bR)$. Consider now the following problem
\begin{equation}\label{prineq}
Lu=h,\  B_iu=0,\ i=1,\dots,m.
\end{equation}
Let
\begin{displaymath}Z:=\{G:\bR^2\to\bR\ :\ G(t,\cdot)\in X\cap \Lt(\bR)\text{ and }\supp\{G(t,\cdot)\}\text{ is compact},\  s\in\bR\}.\end{displaymath}
 $Z$ is a vector space.\par

Let $f\in\im(L_\bR)$ and consider the problem
\begin{equation}\label{prineq2}
RLv=f,\  B_iv=0,\  B_iRv=0,\ i=1,\dots,m.
\end{equation}
Let $G\in Z$ and define the operator $H_G$ such that $H_G(h)|_t:=\int_\bR G(t,s)h(s)\dif s$.
We have now the following theorem relating problems \eqref{prineq} and \eqref{prineq2}.
\begin{thm}\label{thmmostgen}
Assume $L$ and $R$ are the aforementioned operators with the locality and restriction properties and let $h\in\Dom(R_\bR)$. Assume $L$ commutes with $R$ and that there exists $G\in Z$ such that\par
$\begin{array}{rl}
(I) & (RL)_\vdash G=0,\\
(II) & B_{i\,\vdash} G=0,\ i=1,\dots,m,\\
(III) & ( B_iR)_\vdash G=0,\ i=1,\dots,m,\\
(IV) & RLH_Gh=H_{(RL)_\vdash G}h+h,\\
(V) & LH_{R_\vdash G}h=H_{L_\vdash R_\vdash G}h+h.\\
(VI) &  B_iH_G= H_{B_{i\,\vdash} G},\ i=1,\dots,m,\\
(VII) &  B_iRH_G= B_iH_{R_\vdash G}=H_{( B_iR)_\vdash G},\ i=1,\dots,m,\\
\end{array}$\par
Then, $v:=H_G(h)$ is a solution of problem \eqref{prineq2} and $u:=H_{R_\vdash G}(h)$ is a solution of problem \eqref{prineq}.
\end{thm}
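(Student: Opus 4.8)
The plan is to prove both assertions by direct substitution. The statement hands us two explicit candidates, $v:=H_G(h)$ and $u:=H_{R_\vdash G}(h)$, and asks only that they satisfy, respectively, the equation and the boundary conditions defining problems \eqref{prineq2} and \eqref{prineq}. Since the hypotheses (I)--(VII) are engineered precisely so that each defining relation, once the relevant operator is pushed through $H$, becomes a statement about one of $(RL)_\vdash G$, $B_{i\,\vdash}G$ or $(B_iR)_\vdash G$, every verification collapses to a single application of (IV)--(VII) followed by one of the vanishing conditions (I)--(III). No existence, uniqueness, or regularity argument is needed beyond the standing assumptions $G\in Z$ and $h\in\Dom(R_\bR)$, which guarantee that each operator below is evaluated on an element of its domain.

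For the first claim I would check the three requirements of \eqref{prineq2} in turn. Applying $RL$ to $v=H_G(h)$ and invoking (IV) gives $RLv=H_{(RL)_\vdash G}h+h$, whereupon (I) kills the first summand and leaves $RLv=h$; this simultaneously pins down the right-hand side of \eqref{prineq2} as $f=h$. For the boundary data, (VI) yields $B_iv=B_iH_Gh=H_{B_{i\,\vdash}G}h$, which vanishes by (II), while (VII) yields $B_iRv=H_{(B_iR)_\vdash G}h$, which vanishes by (III). Hence $v$ solves \eqref{prineq2}.

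For the second claim, applying $L$ to $u=H_{R_\vdash G}(h)$ and using (V) gives $Lu=H_{L_\vdash R_\vdash G}h+h$. Here lies the one genuinely non-formal step: I must first unwind the subscript notation to see that $L_\vdash R_\vdash G=(LR)_\vdash G$, since by definition $L_\vdash(R_\vdash G)(\cdot,s)=L\big(R(G(\cdot,s))\big)$, and only then invoke the hypothesis that $L$ commutes with $R$ to replace $(LR)_\vdash G$ by $(RL)_\vdash G$, which is zero by (I). This forces the integral term to vanish and gives $Lu=h$. The boundary conditions follow at once from (VII) and (III), exactly as for $B_iRv$ above, yielding $B_iu=0$, so $u$ solves \eqref{prineq}.

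The main obstacle I anticipate is exactly the bookkeeping in that commuting step. One must be scrupulous that the subscript $\vdash$ means ``act on the first variable,'' so that $L_\vdash R_\vdash G$ really is $(LR)_\vdash G$ and not something with the factors transposed, and one must recognise that the isolated hypothesis $LR=RL$ is precisely what licenses rewriting $(LR)_\vdash G$ as $(RL)_\vdash G=0$. Everything else is a mechanical chaining of (I)--(VII); the only juncture at which the hypotheses could fail to combine is when $L$ and $R$ do not commute, which is why that assumption is flagged separately in the statement.
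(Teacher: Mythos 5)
Your proposal is correct and follows essentially the same route as the paper's own proof: verify $RLv=h$ via (IV) then (I), the boundary conditions via (VI)+(II) and (VII)+(III), and $Lu=h$ via (V), the identification $L_\vdash R_\vdash G=(LR)_\vdash G$, the commutativity $LR=RL$, and (I). Your explicit remark that the commutation hypothesis is exactly what converts $(LR)_\vdash G$ into $(RL)_\vdash G=0$ is the same (brief) step the paper takes.
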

\begin{proof}
$(I)$ and $(IV)$ imply that  \begin{displaymath}RLv=RLH_Gh=H_{(RL)_\vdash G}h+h=H_0h+h=h.\end{displaymath} On the other hand, $(III)$ and $(VII)$ imply that, for every $i=1,\dots,m$,
\begin{displaymath} B_iRv= B_iRH_Gh=H_{( B_iR)_\vdash G}h=0.\end{displaymath}
All the same, by $(II)$ and $(VI)$,
\begin{displaymath} B_iv= B_iH_Gh=H_{B_{i\,\vdash} G}h=0.\end{displaymath}
Therefore, $v$ is a solution to problem \eqref{prineq2}.\par
Now, using $(I)$ and $(V)$ and the fact that $LR=RL$, we have that
\begin{displaymath}Lu=LH_{R_\vdash G}h=H_{L_\vdash R_\vdash G}h+h=H_{(LR)_\vdash G}h+h=H_{(RL)_\vdash G}h+h=h.\end{displaymath}

Taking into account $(III)$ and $(VII)$,
\begin{displaymath} B_iu= B_iH_{R_\vdash G}(h)=H_{( B_iR)_\vdash G}h=0,\ i=1,\dots,m.\end{displaymath}
Hence, $u$ is a solution of problem \eqref{prineq}.
\end{proof}
The following Corollary is proved in the same way as the previous Theorem.
\begin{cor} Assume $G\in Z$ satisfies\par
$\begin{array}{rl}
(1) & L_\vdash G=0,\\
(2) &  B_{i\,\vdash} G=0,\ i=1,\dots,m,\\
(3) & LH_Gh=H_{L_\vdash G}h+h,\\
(4) &  B_iH_Gh=H_{ B_{i\,\vdash} G}h.
\end{array}$\par
Then $u=H_Gh$ is a solution of problem \eqref{prineq}.
\end{cor}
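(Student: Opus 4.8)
The plan is to follow the same two-step verification used in the second half of the proof of Theorem \ref{thmmostgen}, but in the simpler setting where no auxiliary operator $R$ is present, so that $u = H_G h$ is tested directly against the original problem \eqref{prineq} rather than against the reduced problem \eqref{prineq2}. Concretely, I would establish the two defining requirements of \eqref{prineq} --- namely $Lu = h$ and $B_i u = 0$ for $i = 1,\dots,m$ --- one at a time, feeding each with the matching pair of hypotheses among $(1)$--$(4)$.

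First I would verify the differential equation. Applying hypothesis $(3)$ to $u = H_G h$ gives $Lu = LH_G h = H_{L_\vdash G} h + h$. Hypothesis $(1)$ says $L_\vdash G = 0$, so the first term is $H_0 h = 0$, the integral against the identically zero kernel, and hence $Lu = h$, as required. Next I would verify the boundary conditions: by hypothesis $(4)$, $B_i u = B_i H_G h = H_{B_{i\,\vdash} G} h$ for each $i$, and hypothesis $(2)$ gives $B_{i\,\vdash} G = 0$, so again the right-hand side collapses to $H_0 h = 0$. Thus $B_i u = 0$ for all $i$, completing the verification that $u = H_G h$ solves \eqref{prineq}.

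There is no genuine analytic obstacle in the deduction itself: the entire content of the corollary resides in the hypotheses $(3)$ and $(4)$, which encode the nontrivial interchange of $L$ and of the functionals $B_i$ with the integral operator $H_G$ (these are the abstract analogues of the jump condition and boundary compatibility built into a classical Green's function). Once those operator identities are granted, the argument is a one-line substitution using the dependence of $H$ on its kernel together with the vanishing kernels supplied by $(1)$ and $(2)$. The only point worth stating explicitly, exactly as in Theorem \ref{thmmostgen}, is that $H_0 h = 0$, i.e.\ integrating $h$ against the zero kernel returns the zero function, which is immediate from the definition of $H_G$.
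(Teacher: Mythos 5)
Your proof is correct and is exactly the argument the paper intends: the text states the corollary ``is proved in the same way as the previous Theorem,'' and your two-step substitution --- combining $(3)$ with $(1)$ to get $Lu=h$ and $(4)$ with $(2)$ to get $B_iu=0$, using $H_0h=0$ --- mirrors the corresponding steps in the proof of Theorem~\ref{thmmostgen}.
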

\begin{proof}[Proof of Theorem \ref{thmdei}] Originally we would need to take $h\in\Dom(R)$, but by a simple density argument ($\mathcal{C}^\infty(I)$ is dense in $\Lo(I)$) we can take $h\in \Lo(I)$. If we prove that the hypothesis of Theorem \ref{thmmostgen} are satisfied, then the existence of solution will be proved. First, Theorem \ref{thmdec} guarantees the commutativity of $L$ and $R$. Now, Theorem \ref{thmgf} implies hypothesis $(I)-(VII)$ of Theorem \ref{thmmostgen} in terms of the vw-derivative.\par
Indeed, $(I)$ is straightforward from $(G5)$. $(II)$ and $(III)$ are satisfied because $(G1)-(G6)$ hold and $B_iu=B_iRu=0$. and $(G4)$ imply $(IV)$ and $(V)$. $(VI)$ and $(VII)$ hold because of $(G2)$, $(G5)$ and the fact that the boundary conditions commute with the integral.\par
On the other hand, the solution to problem \eqref{rbvp} must be unique for, otherwise, the reduced problem $Su=0$, $B_iRu=0$, $B_iu=0$, $i=1,\dots,n$ would have several solutions, contradicting the hypotheses.
\end{proof}
\par The following Lemma, in the line of \cite{Toj2}, extends the application of Theorem \ref{thmdei} to the case of non-constant coefficients with some restrictions for problems similar to the one in Example \ref{exagf}.
\begin{lem}
\label{lemgen} Consider the problem
\begin{equation}\label{eqW2}u''(t)+a(t)\,u(-t)+b(t)\,u(t)=h(t), u(-T)=u(T),\end{equation}
where $a\in W^{2,1}_{\operatorname{loc}}(\mathbb{R})$ is nonnegative and even,
\begin{displaymath}b=k\,a+\frac{a''}{4\,a}-\frac{5}{16}\left(\frac{a'}{a}\right)^2,
\end{displaymath}
 for some constant $k\in\mathbb{R}$, $k^2\ne 1$ and $b$ is integrable.\par
 Define $A(t):=  \int_0^t\sqrt{a(s)}\operatorname{d} s$, consider
  \begin{displaymath}u''(t)+u(-t)+k\,u(t)=h(t),\ u(-A(T))=u(A(T))\end{displaymath}
 and assume it has a Green's function $G$.

 Then \begin{displaymath}u(t)=\int_{-T}^{T}H(t,s)h(s)\operatorname{d} s\end{displaymath} is a solution of problem \eqref{eqW2} where
\begin{align*}H(t,s):= & \sqrt[4]{\frac{a(s)}{a(t)}}G(A(t),A(s))
\end{align*}
and $H(t,\cdot)h(\cdot)$ is assumed to be integrable in $[-T,T]$.
\end{lem}
\begin{proof} Let $G$ be the Green's function of the problem \begin{displaymath}u''(t)+u(-t)+c\,u(t)=h(t),\ u(-A(T))=u(A(T)),\ u\in \operatorname{W^{2,1}_{loc}}(\bR).\end{displaymath}
Now, we show that $H$ satisfies the equation, that is,
\begin{displaymath}\frac{\partial^2 H}{\partial t^2}(t,s)+a(t)H(-t,s)+b(t)H(t,s)=0\text{ for a.\,e. } t,s\in\bR.\end{displaymath}
\begin{align*} & \frac{\partial^2 H}{\partial t^2}(t,s)=\frac{\partial^2 }{\partial t^2}\left[\sqrt[4]{\frac{a(s)}{a(t)}}G(A(t),A(s))\right]=\frac{\partial }{\partial t}\left[-\frac{a'(t)}{4}\sqrt[4]{\frac{a(s)}{a^5(t)}}G(A(t),A(s))+\sqrt[4]{a(s)a(t)}\frac{\partial G }{\partial t}(A(t),A(s))\right]\\ =& -\frac{a''(t)}{4}\sqrt[4]{\frac{a(s)}{a^5(t)}}G(A(t),A(s))+\frac{5}{16}(a'(t))^2\sqrt[4]{\frac{a(s)}{a^9(t)}}G(A(t),A(s))+ \sqrt[4]{a(s)a^3(t)}\frac{\partial^2 G }{\partial t^2}(A(t),A(s)).
\end{align*}
Therefore,
\begin{align*} & \frac{\partial^2 H}{\partial t^2}(t,s)+a(t)H(-t,s)+b(t)H(t,s) \\ = &
\sqrt[4]{a(s)a^3(t)}\frac{\partial^2 G }{\partial t^2}(A(t),A(s))
+a(t)\sqrt[4]{\frac{a(s)}{a(t)}}G(-A(t),A(s))+c\,a(t)\sqrt[4]{\frac{a(s)}{a(t)}}G(A(t),A(s))\\ = &
\sqrt[4]{a(s)a^3(t)}\(\frac{\partial^2 G }{\partial t^2}(A(t),A(s))
+G(-A(t),A(s))+c\,G(A(t),A(s))\)=0.
\end{align*}
The boundary conditions are satisfied as well.
\end{proof}
\begin{rem} The same construction of Lemma \ref{lemgen} is valid for the case of the initial value problem. We illustrate this in the following example.
\end{rem}
\begin{exa}
Let $a(t)=|t|^p$, $k>1$. Taking $b$ as in Lemma \ref{lemgen},
\begin{displaymath}b(t)=k|t|^p-\frac{p(p+4)}{16t^2},\end{displaymath}
consider problems
\begin{equation}\label{prole1}u''(t)+a(t)\,u(-t)+b(t)\,u(t)=h(t),\ u(0)=u'(0)=0\end{equation}
and 
\begin{equation}\label{prole2}u''(t)+u(-t)+k\,u(t)=h(t),\ u(0)=u'(0)=0.\end{equation}
Using an argument similar as the one in Example \ref{exagf} and considering $R=D^2-\varphi^*+k$, we can reduce problem \eqref{prole2} to
\begin{equation}\label{prole3}u^{(4)}(t)+2ku''(t)+(k^2-1)u(t)=f(t),\ u^{(j)}(0)=0,\ j=0,\dots,3,\end{equation}
which can be decomposed in
\begin{align*}
u''(t)+(k+1)u(t) & =v(t),\ t\in I,\quad u(0)=u'(0)=0,\\
v''(t)+(k-1)v(t) & =f(t),\ t\in I,\quad v(0)=v'(0)=0,
\end{align*}
which have as Green's functions, respectively,
\begin{displaymath}\tilde G_1(t,s)=\frac{\sin\left(\sqrt{k+1}\, (t-s)\right)}{\sqrt{k+1}}\chi_0^t(s),\ t\in\mathbb{R},\end{displaymath}
\begin{displaymath}\tilde G_2(t,s)=\frac{\sin\left(\sqrt{k-1}\, (t-s)\right)}{{\sqrt{k-1}}}\chi_0^t(s),\ t\in\mathbb{R},\end{displaymath}
where $\chi_0^t(s)=1$ if $s\in[0,t]$ and $\chi_0^t(s)=-1$ if $s\in[-t,0)$. Then, the Green's function for problem \eqref{prole3} is
\begin{align*}  G(t,s)  = & \int_{s}^t\tilde G_1(t,r)\tilde G_2(r,s)\operatorname{d} r \\ = & \frac{1}{2\sqrt{k^2-1}} \left[\sqrt{k-1} \sin\left(\sqrt{k+1} (s-t)\right)-\sqrt{k+1} \sin\left(\sqrt{k-1} (s-t)\right)\right]\chi_0^t(s),\end{align*}
Observe that
\[R_\vdash G(t,s)=-\left[\frac{\sin \left(\sqrt{k-1} (s-t)\right)}{2 \sqrt{k-1}}+\frac{\sin \left(\sqrt{k+1} (s-t)\right)}{2 \sqrt{k+1}}\right]\chi_0^t(s).\]
Hence, considering
\begin{displaymath}A(t):=\frac{2}{p+2}|t|^\frac{p}{2}t,\end{displaymath}
the Green's function of  problem \eqref{prole1} follows the expression
\begin{displaymath}H(t,s):=  \sqrt[4]{\frac{a(s)}{a(t)}}G(A(t),A(s)),\end{displaymath}
This is,
\begin{align*}H(t,s)= & -\left|\frac{s}{t}\right|^\frac{p}{4}\left[\frac{\sin \left(\frac{2 \sqrt{k-1} \left(s \left| s\right| ^{p/2}-t \left| t\right| ^{p/2}\right)}{p+2}\right)}{2 \sqrt{k-1}}+\frac{\sin \left(\frac{2 \sqrt{k+1} \left(s \left| s\right| ^{p/2}-t \left| t\right| ^{p/2}\right)}{p+2}\right)}{2 \sqrt{k+1}}\right]\chi_0^t(s).\end{align*}
\end{exa}
\section{The Hilbert transform and other algebras}
In this section we devote our attention to new algebras to which we can apply the previous results. To achieve this goal we recall the definition and remarkable properties of the Hilbert transform (see \cite{King}).\par
We define the \textbf{Hilbert transform} $\mathsf{H}$ of a function $f$ as
\begin{displaymath}\mathsf{H} f(t):=\frac{1}{\pi}\lim_{\e\to\infty}\int_{-\e}^\e\frac{f(s)}{t-s}\dif s\equiv\frac{1}{\pi}\int_{-\infty}^\infty\frac{f(s)}{t-s}\dif s,\end{displaymath}
where the last integral is to be understood as the Cauchy principal value.\par
Among its properties, we would like to point out the following.
\begin{itemize}
\item $\sH:L^p(\bR)\to L^p(\bR)$ is a linear bounded operator for every $p\in (1,+\infty)$ and
\begin{displaymath}\|\sH\|_p=\begin{cases} \tan\frac{\pi}{2 p}, & p\in(1,2], \\ \cot\frac{\pi}{2 p}, & p\in[2,+\infty),\end{cases}\end{displaymath}
in particular $\|\sH\|_2=1$.
\item  \textit{$\mathsf{H}$ is an anti-involution:} $\sH^2=-\Id$.
\item Let $\sigma(t)=at+b$ for $a,b\in\bR$. Then $\sH\sigma^*=\sign(a)\sigma^*\sH$ (in particular, $\sH\phi^*=-\phi^*\sH$). Furthermore, if a linear bounded operator $\mathsf{O}:L^p(\bR)\to L^p(\bR)$ satisfies this property, $\mathsf{O}=\beta\sf H$ where $\b\in\bR$.
\item \textit{$\sH$ commutes with the derivative:} $\sH D=D\sH$.
\item $\sH(f*g)=f*\sH g=\sH*g$ where $*$ denotes the convolution.
\item \textit{$\sH$ is an isometry in $L^2(\bR)$:} $\<\sH f,\sH g\>=\<f,g\>$ where $\<\ ,\ \>$ is the scalar product in $L^2(\bR)$. In particular $\|\sH f\|_2=\|f\|_2$.
\end{itemize}
Consider now the same construction we did for $\bR[D,\phi^*]$ changing $\phi^*$ by $\sH$ and denote this algebra as $\bR[D,\sH]$. In this case we are dealing with a commutative algebra. Actually, this algebra is isomorphic to the complex polynomials $\bC[D]$. Just consider the isomorphism
\begin{displaymath}\begin{CD}\bR[D,\sH] @>\displaystyle\Xi>> \bC[D]\\
\sum\limits_{j}(a_j\sH+b_j)D^j @>>> \sum\limits_{j}(a_j\, i+b_j)D^j.\end{CD}\end{displaymath}
Observe that $\Xi|_{\bR[D]}=\Id|_{\bR[D]}$.

We now state a result analogous to Theorem \ref{thmdec}.
\begin{thm} \label{thmdec2}
Take
\begin{equation*}\label{Lop2}L=\sum\limits_{j}(a_j\sH+b_j)D^j\in\bR[D,\sH]\end{equation*}
and define
\begin{equation*}\label{Rop2}R=\sum\limits_{j}(a_j\sH-b_j)D^j.\end{equation*}
Then $LR=RL\in\bR[D]$.
\end{thm}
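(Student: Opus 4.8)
The plan is to exploit the algebra isomorphism $\Xi:\bR[D,\sH]\to\bC[D]$ introduced immediately above the statement, under which the whole claim collapses to an elementary fact about complex polynomials. First I would dispose of commutativity essentially for free: since $\sH D=D\sH$ and scalars commute with every generator, the algebra $\bR[D,\sH]$ is commutative, so $LR=RL$ with no further work (equivalently, $\bC[D]$ is commutative and $\Xi$ is injective). The entire content of the theorem is therefore the assertion $LR\in\bR[D]$, i.e.\ that all the $\sH$-terms in the product cancel.

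For that, I would compute the images of $L$ and $R$ under $\Xi$. Writing $c_j:=b_j+i\,a_j$, we have $\Xi(L)=\sum_j c_j D^j=:P(D)$. The key observation is that $R$ is obtained from $L$ by flipping the sign of each $b_j$, so $\Xi(R)=\sum_j(i\,a_j-b_j)D^j=-\sum_j\overline{c_j}\,D^j=-\overline{P}(D)$, where $\overline{P}$ denotes the coefficient-wise complex conjugate of $P$. Hence $\Xi(LR)=P(D)\,\bigl(-\overline{P}(D)\bigr)=-P(D)\overline{P}(D)$.

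It then remains only to check that $P\overline{P}$ has real coefficients. The coefficient of $D^m$ in $P\overline{P}$ is $\sum_{j+k=m}c_j\overline{c_k}$; conjugating this sum and relabelling $j\leftrightarrow k$ returns the same expression, so each coefficient equals its own conjugate and is therefore real. Thus $\Xi(LR)\in\bR[D]$, and since $\Xi$ restricts to the identity on $\bR[D]$ (so that $\Xi^{-1}$ sends real polynomials to themselves), we conclude $LR\in\bR[D]$, completing the proof.

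I do not expect a genuine obstacle here; the only point requiring care is the sign bookkeeping in the identification $\Xi(R)=-\overline{P}(D)$, which is precisely the structural feature forcing the $\sH$-part of the product to vanish. As a sanity check one can argue directly inside $\bR[D,\sH]$ without passing through $\bC[D]$: expanding $LR$ and using $\sH^2=-\Id$, the coefficient of $\sH D^m$ is $\sum_{j+k=m}(a_k b_j-a_j b_k)$, an antisymmetric sum that cancels in pairs (the diagonal term $j=k$ vanishing on its own), which reproduces the same conclusion and mirrors the computation already carried out in Theorem \ref{thmdec}.
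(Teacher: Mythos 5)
Your argument is correct and is essentially the paper's own: the paper likewise observes that $\Xi(R)=-\overline{\Xi(L)}$, so $RL=\Xi^{-1}\bigl(-\Xi(L)\overline{\Xi(L)}\bigr)$ with $\Xi(L)\overline{\Xi(L)}\in\bR[D]$, exactly your $-P\overline{P}$ computation. Your closing direct expansion in $\bR[D,\sH]$ (the antisymmetric cancellation of the $\sH D^m$ coefficients) is a nice independent check but adds nothing beyond the paper's route.
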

\begin{rem} Theorem \ref{thmdec2} is clear from the point of view of $\bC[D]$. Since $\Xi(R)=-\ol{\Xi(L)}$,
\begin{displaymath}RL=\Xi^{-1}(-\Xi(L)\ol{\Xi(L)})=\Xi^{-1}(-|\Xi(L)|^2).\end{displaymath}
Therefore, $|\Xi(L)|^2\in\bR[D]$, implies $RL\in\bR[D]$.
\end{rem}
\begin{rem} Since $\bR[D,\sH]$ is isomorphic to $\bC[D]$, the Fundamental Theorem of Algebra also applies to  $\bR[D,\sH]$, which shows a clear classification of the decompositions of an element of  $\bR[D,\sH]$ in contrast with those of  $\bR[D,\phi^*]$ which, in Section 2, was shown not to be a unique factorization domain.
\end{rem}
In the following example we will use some properties of the Hilbert transform:
\begin{align*}\sH\cos & =\sin, \\ \sH\sin & =-\cos, \\ \sH( tf(t)){(t)} & =t\,\sH f(t)-\frac{1}{\pi}\int_{-\infty}^\infty f(s)\dif s,
\end{align*}
where the integral is considered as the principal value.
\begin{exa}
Consider the problem
\begin{equation}\label{proh} Lu(t)\equiv u'(t)+a\sH u(t)=h(t):=\sin a t,\ u(0)=0,
\end{equation}
where $a>0$. Composing the operator $L=D+a\sH$ with the operator $R=D-a\sH$ we obtain $S=RL=D^2+a^2$, the harmonic oscillator operator. The extra boundary conditions obtained applying $R$ are $u'(0)-a\sH u(0)=0$. The general solution to the problem $u''(t)+a^2u(t)=Rh(t)=2a\cos at, u(0)=0$ is given by
\begin{displaymath}v(t)=\int_0^t\frac{\sin\(a\, [t-s]\)}{a}Rh(s)\dif s+\a\sin at=(t+\a)\sin at,\end{displaymath}
where $\a$ is a real constant. Hence,
\begin{displaymath}\sH v(t)=-(t+\a)\cos at.\end{displaymath}
If we impose the boundary conditions $u'(0)-a\sH u(0)=0$ then we get $\a=0$. Hence, the unique solution of problem \eqref{proh} is
 \begin{displaymath}u(t)=t\sin at.\end{displaymath}
\end{exa}
\begin{rem} It is easy to check that the kernel of $D+a\sH$ ($a>0$) is spanned by $\sin t$ and $\cos t$ and therefore, the kernel of $D-a\sH$ is just $0$. This defies, in the line of Remark \ref{remord}, the usual relation between the degree of the operator and the dimension of the kernel which is held for ODEs, that is, the operator of a linear ODE of order $n$ has a kernel of dimension $n$. In this case we have the order 1 operator $D+a\sH$ with a dimension 2 kernel and the injective order 1 operator $D-a\sH$.
\end{rem}
Now, we consider operators with reflection and Hilbert transforms, and denote the algebra as $\bR[D,\sH,\phi^*]$. We can again state a reduction Theorem.
\begin{thm}\label{thmdec3}
Take
\begin{equation*}\label{Lop3}L=\sum_{i}a_i\phi^*\sH D^i+\sum_{i}b_i\sH D^i + \sum_{i}c_i\phi^* D^i+\sum_{i}d_i D^i\in\bR[D,\sH,\phi^*]\end{equation*}
and define
\begin{equation*}\label{Rop3}R=\sum_{j}a_j\phi^*\sH D^j+\sum_{j}(-1)^jb_j\sH D^j + \sum_{j}c_j\phi^* D^j-\sum_{j}(-1)^jd_j D^j.\end{equation*}
Then $LR=RL\in\bR[D]$.
\end{thm}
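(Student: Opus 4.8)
The plan is to treat $\bR[D,\sH,\phi^*]$ as the commutative algebra $\bR[D,\sH]\cong\bC[D]$ equipped with the extra involution $\phi^*$, exactly in the spirit of the Remark following Theorem \ref{thmdec2}, and then to mimic the bookkeeping of Theorem \ref{thmdec}. The only relations I need are $\sH D=D\sH$, $\sH\phi^*=-\phi^*\sH$, $\sH^2=-\Id$, $(\phi^*)^2=\Id$ and $D\phi^*=-\phi^*D$; together they let me push every power of $D$ to the right and collapse any product of the coefficient operators into one of the four ``types'' $\Id,\sH,\phi^*,\phi^*\sH$. First I would regroup $L$ according to whether a $\phi^*$ is present, writing $L=z_0+\phi^* z_1$ with $z_0:=T(D)+\sH Q(D)$ and $z_1:=S(D)+\sH P(D)$ lying in the commutative subalgebra $\bR[D,\sH]$, where $P,Q,S,T$ denote the polynomials $\sum_i a_iD^i$, $\sum_i b_iD^i$, $\sum_i c_iD^i$, $\sum_i d_iD^i$. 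Under $\Xi$ these correspond to $w_0=T+iQ$ and $w_1=S+iP$ in $\bC[D]$.

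The computational engine is the single conjugation identity $z\,\phi^*=\phi^*\,\overline{z}(-D)$ for $z\in\bR[D,\sH]$, where $\overline{\,\cdot\,}$ is the involution $\sH\mapsto-\sH$ (complex conjugation under $\Xi$) and $z(-D)$ is the substitution $D\mapsto-D$; this is checked at once on the generators $D$ and $\sH$. Regrouping $R$ the same way gives $R=z_0'+\phi^* z_1$ with the \emph{same} $z_1$ as in $L$ and with $z_0'=\sH Q(-D)-T(-D)=-\overline{z_0}(-D)$. Expanding the product and moving the inner $\phi^*$ to the left with the identity above, I obtain
\[
RL=\bigl(z_0'\,z_0+\overline{z_1}(-D)\,z_1\bigr)+\phi^*\bigl(\overline{z_0'}(-D)\,z_1+z_1\,z_0\bigr).
\]

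The term carrying $\phi^*$ vanishes for free: from $z_0'=-\overline{z_0}(-D)$ one gets $\overline{z_0'}(-D)=-z_0$, and since $\bR[D,\sH]$ is commutative, $\overline{z_0'}(-D)\,z_1+z_1z_0=-z_0z_1+z_1z_0=0$; this is the exact analogue of the vanishing of the $\phi^*$-cross terms in Theorem \ref{thmdec}. It then remains to prove that the $\phi^*$-free remainder $z_0'z_0+\overline{z_1}(-D)z_1$ actually lies in $\bR[D]$, i.e.\ that its $\sH$-component is zero. I expect this last step to be the crux of the argument and the main obstacle: whereas in Theorem \ref{thmdec2} the corresponding quantity is the manifestly real norm $-|w_0|^2$, the presence of $\phi^*$ forces the reflected factors $z(-D)$, and one must verify — via the explicit coefficient formula for $\overline{w}(-D)\,w$ under $\Xi$, as in the Remark after Theorem \ref{thmdec2} — that the imaginary (that is, $\sH$) parts coming from the $z_0$ and $z_1$ blocks cancel. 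Equivalently, one can run the brute-force expansion of $RL$ into the sixteen products indexed by the four types $\{\Id,\sH,\phi^*,\phi^*\sH\}$: the $\phi^*$- and $\phi^*\sH$-type coefficients are antisymmetric combinations of commuting polynomials in $D$ and drop out immediately, so that the whole theorem rests on checking that the $\sH$-type coefficient, into which the alternating signs $(-1)^j$ in the definition of $R$ feed, collapses to zero.
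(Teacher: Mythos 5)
Your algebra is sound up to the last step, and that last step is precisely where the result breaks down. Writing $L=z_0+\phi^*z_1$ and $R=z_0'+\phi^*z_1$ with $z_0=T(D)+\sH Q(D)$, $z_1=S(D)+\sH P(D)$, $z_0'=\sH Q(-D)-T(-D)=-\sigma(z_0)$, where $P=\sum_ia_iD^i$, $Q=\sum_ib_iD^i$, $S=\sum_ic_iD^i$, $T=\sum_id_iD^i$ and $\sigma$ denotes the automorphism of $\bR[D,\sH]$ sending $D\mapsto-D$, $\sH\mapsto-\sH$ (conjugation by $\phi^*$), your identity
$$RL=\bigl(z_0'z_0+\sigma(z_1)z_1\bigr)+\phi^*\bigl(\sigma(z_0')z_1+z_1z_0\bigr)=\sigma(z_1)z_1-\sigma(z_0)z_0$$
is correct, and the $\phi^*$-component vanishes for exactly the reason you give. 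But the $\sH$-component of the remainder, which you defer as ``the crux'', equals
$$\bigl[S(-D)P(D)-P(-D)S(D)\bigr]-\bigl[T(-D)Q(D)-Q(-D)T(D)\bigr],$$
a generically nonzero odd polynomial in $D$: it does \emph{not} cancel. The minimal counterexample is $L=\phi^*\sH+\phi^*D$ (i.e.\ $a_0=c_1=1$, all other coefficients zero), for which the recipe returns $R=L$, and a direct computation with $\sH\phi^*=-\phi^*\sH$, $D\phi^*=-\phi^*D$, $D\sH=\sH D$, $\sH^2=-\Id$, $(\phi^*)^2=\Id$ gives
$$RL=(\phi^*\sH+\phi^*D)^2=\Id-2\sH D-D^2\notin\bR[D].$$
So no amount of further bookkeeping will close your gap: Theorem \ref{thmdec3} is false as stated.

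To be fair, this is not a defect of your route alone: the paper states Theorem \ref{thmdec3} with no proof at all, so there is nothing to compare against, and your computation has in effect exposed an error in the statement rather than failed to reproduce an argument. The structural reason is that a generic $L\in\bR[D,\sH,\phi^*]$ cannot be reduced to $\bR[D]$ by an $R$ that is linear in its coefficients: one should first clear $\phi^*$ by multiplying by $\sigma(z_0)-\phi^*z_1$, which yields $\sigma(z_0)z_0-\sigma(z_1)z_1\in\bR[D,\sH]$, and then apply Theorem \ref{thmdec2} to that product; the resulting $R$ has higher degree in the coefficients of $L$. The linear $R$ of the statement works only in special cases, e.g.\ when $S(-D)P(D)=P(-D)S(D)$ and $T(-D)Q(D)=Q(-D)T(D)$ (in particular when each of the pairs $(P,S)$ and $(Q,T)$ consists of polynomials of equal parity), or in the degenerate situations already covered by Theorems \ref{thmdec} and \ref{thmdec2}. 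I would recommend recording the counterexample and reformulating the theorem accordingly.
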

\subsection{Hyperbolic numbers as operators}
Finally, we use the same idea behind the isomorphism $\Xi$ to construct an operator algebra isomorphic to the algebra of polynomials on the hyperbolic numbers.\par
The hyperbolic numbers\footnote{See \cite{Ant, Toj4} for an introduction to hyperbolic numbers and some of their properties and applications.} are defined, in a similar way to the complex numbers, as follows,
\begin{displaymath}\mathbb D=\{x+jy\ :\ x,y\in\mathbb R,\ j\not\in\mathbb R,\ j^2=1\}.\end{displaymath}
The arithmetic in $\mathbb D$ is that obtained assuming the commutative, associative and distributive properties for the sum and product. In a parallel fashion to the complex numbers, if $w\in\mathbb D$, with $w=x+jy$, we can define
\begin{displaymath}\overline{w}: =x-jy,\quad \Re(w):=x,\quad \Im(w):= y,\end{displaymath}
and, since $w\overline w=x^2-y^2\in\mathbb R$, we set
\begin{displaymath}|w|:=\sqrt{|w\overline w|},\end{displaymath}
which is called the \textbf{Minkowski norm}. It is clear that $|w_1w_2|=|w_1||w_2|$ for every $w_1,w_2\in\mathbb D$ and, if $|w|\ne0$, then $w^{-1}=\overline w/|w|^2$. If we add the norm
\begin{displaymath}\|w\|=\sqrt{2(x^2+y^2)},\end{displaymath}
we have that $(\mathbb D, \|\cdot\|)$ is a Banach algebra, so the exponential and the hyperbolic trigonometric functions are well defined. Although, unlike $\mathbb C$, $\mathbb D$ is not a division algebra (not every non-zero element has an inverse), we can derive calculus (differentiation, integration, holomorphic functions\dots) for $\mathbb D$ as well \cite{Ant}.\par
In this setting, we want to derive an operator $J$ defined on a suitable space of functions such that satisfies the same algebraic properties as the hyperbolic imaginary unity $j$. In other words, we want the map
\begin{displaymath}\begin{CD}\bR[D,J] @>\displaystyle\Theta>> \bD[D]\\
\sum\limits_{k}(a_kJ+b_k)D^j @>>> \sum\limits_{k}(a_k\, j+b_k)D^k.\end{CD}\end{displaymath}
to be an algebra isomorphism. This implies:
\begin{itemize}
\item $J$ is a linear operator,
\item $J\not\in\bR[D]$.
\item $J^2=\Id$, that is, $J$ is an involution,
\item $JD=DJ$.
\end{itemize}
There is a simple characterization of linear involutions on a vector space: every linear involution $J$ is of the form
\begin{displaymath}J=\pm(2P-\Id)\end{displaymath}
where $P$ is a projection operator, that is, $P^2=P$. It is clear that $\pm(2P-\Id)$ is, indeed a linear operator and an involution. On the other hand, it is simple to check that, if $J$ Is a linear involution, $P:=(\pm J+\Id)/2$ is a projection, so $J=\pm(2P-\Id)$.\par
Hence, it is sufficient to look for a projection $P$ commuting with de derivative.

\begin{exa} Consider the space $W=\Lt([-\pi,\pi])$ and define
\begin{displaymath}P f(t):=\sum_\n \int_{-\pi}^\pi f(s)\cos(2\,n\,s)\dif s\,\cos(2\,n\,t)\text{ for every} f\in W,\end{displaymath}
that is, take only the sum over the even coefficients of the Fourier series of $f$. Clearly $PD=DP$. $J:=2P-\Id$ satisfies the aforementioned properties.
\end{exa}
The algebra $\bR[D,J]$, being isomorphic to $\bD[D]$, satisfies also very good algebraic properties (see, for instance, \cite{Poo}). In order to get an analogous theorem to Theorem \ref{thmdec} for the algebra $\bR[D,J]$ it is enough to take, as in the case of $\bR[D,J]$, $R=\Theta^{-1}(\ol{\Theta(L)})$.

\end{document}